\theoremstyle{plain}
\newtheorem{theorem}{Theorem}[section]
\theoremstyle{definition}
\newtheorem{remark}[theorem]{Remark}
\newtheorem{example}[theorem]{Example}
\newtheorem{cor}[theorem]{Corollary}
\theoremstyle{remark}
\begin{document}

\title[ Numerical radius inequalities ]{Numerical radius inequalities and its applications in estimation of zeros of polynomials}

\author{Pintu Bhunia, Santanu Bag and Kallol Paul}

\address{(Bhunia)Department of Mathematics, Jadavpur University, Kolkata 700032, India}
\email{pintubhunia5206@gmail.com}

\address{(Bag)Department of Mathematics, Vivekananda College For Women, Barisha, Kolkata 700008, India}
\email{santanumath84@gmail.com}

\address{(Paul)Department of Mathematics, Jadavpur University, Kolkata 700032, India}
\email{kalloldada@gmail.com}

%\thanks will become a 1st page footnote.
\thanks{ The first author would like to thank UGC, Govt.of India for the financial support in the form of junior research fellowship.}
%\thanks{}
%\thanks{}
%    Information for second author

%    General info

\subjclass[2010]{Primary 47A12, 15A60, 26C10.}
\keywords{ Numerical radius; Hilbert space; bounded linear operator; zeros of polynomial. }

\date{}
\maketitle
\begin{abstract}
 We present some upper and lower bounds for the numerical radius of a bounded linear operator defined on complex Hilbert space, which improves on  the existing upper and lower bounds. We also present an upper bound for the spectral radius of sum of product of $n$ pairs of operators. As an application of the results obtained, we provide a better estimation for the zeros of a given  polynomial.

\end{abstract}

\section{Introduction}
\noindent Let $B(H)$ denote the $C^*$-algebra of all bounded linear operators on a complex Hilbert space $H$ with usual inner product $\langle .,. \rangle$. Let $T\in B(H)$ and  $W(T)$, $w(T)$, $m(T)$, $\|T\|$ be the  numerical range, numerical radius, crawford number, operator norm of $T$ respectively, defined as follows:
\begin{eqnarray*} 
W(T)&=&\{ \langle Tx,x \rangle ~~:~~ x \in H, \|x\|=1\},\\
w(T)&=&\sup\{|\lambda| : \lambda\in W(T) \},\\
m(T)&=&\inf\{|\lambda| : \lambda\in W(T) \},\\  
\|T\|&=&\sup\{\|Tx\|~~:~~ x \in H, \|x\|=1\}.
\end{eqnarray*}
It is well known that $w(.)$ is a norm on $B(H)$, which is equivalent to the usual operator norm $\|.\|$ and satisfies the inequality 
\[ \frac{1}{2}\|T\|\leq w(T)\leq \|T\|. \]

The first inequality becomes an equality if $T^2=0$ and the second inequality becomes an equality if $T$ is normal. Various numerical radius inequalities improving this inequality have been given in \cite{AAAB,BK,FK,FK2,TY}.  $T$ can be represented as $T=\textit{Re}(T)+i\textit{Im}(T)$, the Cartesian decomposition, where $\textit{Re}(T)$ and $\textit{Im}(T)$ are real part of $T$ and imaginary part of $T$ respectively, i.e., $\textit{Re}(T)=\frac{T+T^*}{2}$  and $\textit{Im}(T)=\frac{T-T^*}{2i}$,  $T^*$ denotes the adjoint of $T$. It is well known  that $w(T)=\sup_{\theta \in \mathbb{R}}\|H_\theta\|$, where  $H_\theta=\textit{Re}(e^{i\theta}T)$. Let $\rho(T)$ be the spectral radius of $T$, i.e., $\rho(T)=\sup\{|\lambda|~~:~~\lambda \in \sigma(T)\}$, where $\sigma(T)$ denotes the point spectrum of $T$. Also it is well known that $\sigma(T) \subseteq \overline{W(T)}$. 

In this paper we obtain an upper bound for the numerical radius of a bounded linear operator which improves on the existing upper bound given in \cite{OK}. Also we obtain a lower bound for the numerical radius of a bounded linear operator which improves on the existing lower bound given in \cite{FK}. We present an  upper bound of the numerical radius in terms of $\|H_\theta\|$ and a lower bound of the numerical radius in terms of spectral values of $\textit{Re}(T)$ and $\textit{Im}(T)$, which improves on existing lower bounds. We also estimate the spectral radius of sum of product of $n$ pairs of operators.  As an application of the numerical radius inequalities obtained here we estimate the zeros of a polynomial. Various mathematicians 
have estimated the zeros of polynomials over the years using different approaches. We show with numerical examples that the estimations obtained by us is better than the existing ones done by  \cite{PB1,PB2}. 
  
\section{On upper bound of numerical radius inequalities}
We begin this section with the following inequality which improves on  upper bound of the numerical radius of a bounded linear operator on complex Hilbert space.

\begin{theorem}\label{theorem:upperbound}
Let $T \in B(H)$. Then  
\[w^{4}(T)\leq \frac{1}{4}w^2(T^2)+\frac{1}{8}w(T^2P+PT^2)+\frac{1}{16}\|P\|^2,\]  where $P=T^{*}T+TT^{*}.$
\end{theorem}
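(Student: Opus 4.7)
The plan is to exploit the formula $w(T)=\sup_{\theta\in\mathbb{R}}\|H_\theta\|$ recalled in the introduction, where $H_\theta=\textit{Re}(e^{i\theta}T)=\tfrac12(e^{i\theta}T+e^{-i\theta}T^*)$ is self-adjoint. Since self-adjointness gives $\|H_\theta\|^4=\|H_\theta^4\|$, the strategy is to expand $H_\theta^4$ as a sum of three operators whose norms are controlled by the three quantities $w^2(T^2)$, $w(T^2P+PT^2)$ and $\|P\|^2$ appearing on the right-hand side, and then take the supremum over $\theta$.

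First I would square $H_\theta$. Expanding $\tfrac14(e^{i\theta}T+e^{-i\theta}T^*)^2$ produces the cross terms $T^*T+TT^*=P$ together with the pure terms $e^{2i\theta}T^2+e^{-2i\theta}T^{*2}=2\,\textit{Re}(e^{2i\theta}T^2)$, giving the clean decomposition
\[
H_\theta^2 \;=\; \tfrac12 R_\theta + \tfrac14 P, \qquad \text{where } R_\theta := \textit{Re}(e^{2i\theta}T^2).
\]
Squaring once more via $(aA+bB)^2 = a^2A^2+ab(AB+BA)+b^2B^2$ with $a=\tfrac12$, $b=\tfrac14$ gives $H_\theta^4 = \tfrac14 R_\theta^2 + \tfrac18(R_\theta P + PR_\theta) + \tfrac{1}{16}P^2$, so the triangle inequality together with $\|R_\theta^2\|=\|R_\theta\|^2$ and $\|P^2\|=\|P\|^2$ yields
\[
\|H_\theta\|^4 \;=\; \|H_\theta^4\| \;\leq\; \tfrac14\|R_\theta\|^2 + \tfrac18\|R_\theta P + PR_\theta\| + \tfrac{1}{16}\|P\|^2.
\]

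It remains to replace each norm on the right by a quantity uniform in $\theta$. The bound $\|R_\theta\|^2\leq w^2(T^2)$ is immediate from $\|\textit{Re}(e^{2i\theta}T^2)\|\leq \sup_\varphi\|\textit{Re}(e^{i\varphi}T^2)\|=w(T^2)$. The decisive identity is
\[
R_\theta P + PR_\theta \;=\; \textit{Re}\bigl(e^{2i\theta}(T^2P+PT^2)\bigr),
\]
which follows from a direct expansion after noting $(T^2P+PT^2)^*=PT^{*2}+T^{*2}P$; this forces $\|R_\theta P+PR_\theta\|\leq w(T^2P+PT^2)$. Taking the supremum over $\theta$ in the previous display then produces $w^4(T)$ on the left and the desired right-hand side on the right.

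The main obstacle I anticipate is simply spotting the right setup: the product of coefficients $\tfrac12\cdot\tfrac14=\tfrac18$ arising from squaring $H_\theta^2$ is exactly the coefficient on the middle term of the theorem, and the identity $R_\theta P+PR_\theta=\textit{Re}(e^{2i\theta}(T^2P+PT^2))$ is what converts the $\theta$-dependent cross term into a single bound by $w(T^2P+PT^2)$. Once these two observations are in hand, the remainder is a mechanical verification.
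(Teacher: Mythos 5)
Your proposal is correct and follows essentially the same route as the paper: expand $H_\theta^2=\tfrac12\textit{Re}(e^{2i\theta}T^2)+\tfrac14P$, square again, identify the cross term as $\textit{Re}\bigl(e^{2i\theta}(T^2P+PT^2)\bigr)$, bound each piece by a $\theta$-independent quantity, and take the supremum. The only cosmetic difference is your introduction of the shorthand $R_\theta$; the decomposition and all three estimates coincide with the paper's argument.
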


\begin{proof}
We know that $w(T)=\sup_{\theta \in \mathbb{R}}\|H_\theta\|$ where $H_{\theta}=\textit{Re}(e^{i \theta} T)$. Then,
\begin{eqnarray*}
    H_{\theta} & = & \frac{1}{2}(e^{i\theta} T + e^{-i\theta} T^*) \\
   \Rightarrow 4 {H_{\theta}}^2  &= & e^{2i\theta} T^2 + e^{-2i\theta} {T^*}^2 + P \\ 
	\Rightarrow 16 {H_{\theta}}^4  &= & \big(e^{2i\theta} T^2 + e^{-2i\theta} {T^*}^2 + P \big) \big(e^{2i\theta} T^2 + e^{-2i\theta} {T^*}^2 + P \big)\\
	       &= & \big(e^{2i\theta} T^2 + e^{-2i\theta} {T^*}^2 \big)^2+\big(e^{2i\theta} T^2 + e^{-2i\theta} {T^*}^2 \big)P \\ &+& P \big(e^{2i\theta} T^2 + e^{-2i\theta} {T^*}^2 \big)+P^2\\
& = & 4\big(\textit{Re}(e^{2i\theta} T^2)\big)^2+ 2 \textit{Re}(e^{2i\theta} (T^2P+PT^2))  +P^2\\	
\Rightarrow \| {H_{\theta}}^4 \| &\leq &	\frac{1}{4}\big \|	\textit{Re}(e^{2i\theta} T^2)	\big \|^2+\frac{1}{8}\big \|	\textit{Re}(e^{2i\theta} (T^2P+PT^2))	\big \| +\frac{1}{16} \|P\|^2
\end{eqnarray*}
 Now taking the supremum over $\theta \in \mathbb{R}$ in the above inequality we get,
\begin{eqnarray*}
 \Rightarrow w^{4}(T)&\leq&\frac{1}{4}w^2(T^2)+\frac{1}{8}w(T^2P+PT^2)+\frac{1}{16}\|P\|^2.
\end{eqnarray*}
\end{proof}
\begin{remark} 
 It is easy to check that $w(T^2P+PT^2)\leq 2w(T^2)\|P\|, $ (see \cite{FH}) and so the bound obtained in Theorem \ref{theorem:upperbound} improves on the bound obtained  by Abu-Omar and Kittaneh \cite{OK}, namely,
\begin{eqnarray*}
  w^{4}(T)&\leq&\frac{1}{4}w^2(T^2)+\frac{1}{4}w(T^2)\|P\|+\frac{1}{16}\|P\|^2.
\end{eqnarray*}
Abu-Omar and Kittaneh \cite{OK} also proved that this bound is better than the bounds  obtained in \cite{FK2,FK} 
 \begin{eqnarray*}
  w(T)&\leq& \frac{1}{2}\big( \|T\|+\|T^2\|^{\frac{1}{2}}\big) 
	\end{eqnarray*} and
	\begin{eqnarray*}
	w^2(T)&\leq& \frac{1}{2}\| P\|.
\end{eqnarray*}
 Dragomir \cite{D} proved that  $w^2(T)\leq \frac{1}{2}[w(T^2)+\|T\|^2]$, i.e., $w^4(T)\leq \frac{1}{4}w^2(T^2)+\frac{1}{2}w(T^2)\|T\|^2+\frac{1}{4}\|T\|^4$ which is weaker than the bound obtained by Abu-Omar and Kittaneh \cite{OK}.
Thus the bound obtained here improves on all the existing upper bounds on numerical radius inequalities. 
\end{remark}
We next prove the following inequality. 
\begin{theorem}\label{theorem:upperboundA}
Let $T \in B(H)$. Then 
\[w^{3}(T)\leq \frac{1}{4}w(T^3)+\frac{1}{4}w(T^2T^*+{T^*}T^2+TT^*T).\] Moreover if $T^2=0$ then $w(T)=\frac{1}{2}\sqrt{\|TT^*+{T^*}T\|}$ and if $T^3=0$ then $w(T)=\big(\frac{1}{4}w(T^2T{^*}+{T{^*}}T^2+TT{^*}T)\big)^{\frac{1}{3}}$.
\end{theorem}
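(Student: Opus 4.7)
The plan is to adapt the technique of Theorem \ref{theorem:upperbound}, replacing the fourth power of $H_\theta$ with the third. Since $H_\theta := \textit{Re}(e^{i\theta}T)$ is self-adjoint, the identity $\|H_\theta\|^3 = \|H_\theta^3\|$ converts any estimate on the operator $H_\theta^3$ into an estimate on $\|H_\theta\|^3$, and then $w(T)=\sup_\theta \|H_\theta\|$ delivers a bound on $w^3(T)$.

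First I would expand the non-commutative cube $(2H_\theta)^3 = (e^{i\theta}T + e^{-i\theta}T^*)^3$. The eight length-three words in $e^{i\theta}T$ and $e^{-i\theta}T^*$ split by $\theta$-frequency: two produce $e^{\pm 3i\theta}$ terms and six produce $e^{\pm i\theta}$ terms. Using $(T^2T^*)^*=TT^{*2}$, $(TT^*T)^*=T^*TT^*$, $(T^*T^2)^*=T^{*2}T$, the six mixed products pair into a self-adjoint block, giving
\[8H_\theta^3 = 2\,\textit{Re}(e^{3i\theta}T^3) + 2\,\textit{Re}\bigl(e^{i\theta}(T^2T^*+TT^*T+T^*T^2)\bigr).\]
Taking norms and applying the triangle inequality yields
\[\|H_\theta\|^3 = \|H_\theta^3\| \le \tfrac{1}{4}\|\textit{Re}(e^{3i\theta}T^3)\| + \tfrac{1}{4}\|\textit{Re}(e^{i\theta}(T^2T^*+TT^*T+T^*T^2))\|.\]

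Next I would take the supremum over $\theta \in \mathbb{R}$. The change of variable $\phi = 3\theta$ shows $\sup_\theta \|\textit{Re}(e^{3i\theta}T^3)\| = w(T^3)$, and the identity $w(A)=\sup_\theta\|\textit{Re}(e^{i\theta}A)\|$ recalled in the introduction handles the other term. Bounding the supremum of a sum by the sum of the suprema produces
\[w^3(T) \le \tfrac{1}{4}w(T^3) + \tfrac{1}{4}w(T^2T^* + TT^*T + T^*T^2),\]
which is the main inequality.

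For the supplementary assertions, the $T^3=0$ case is immediate: the first term drops identically, the triangle inequality collapses to an equality, and the formula $w(T)=(\tfrac{1}{4}w(T^2T^*+T^*T^2+TT^*T))^{1/3}$ falls out. The case $T^2=0$ is sharper than what the main inequality yields directly, so I would invoke the classical equality $w(T)=\tfrac{1}{2}\|T\|$ for $T$ nilpotent of order two, together with $\|TT^*+T^*T\|=\|T\|^2$ obtained from the block form $T=\begin{pmatrix}0 & A\\ 0 & 0\end{pmatrix}$ on $\ker T \oplus (\ker T)^\perp$, under which $TT^*+T^*T$ is block-diagonal with blocks $AA^*$ and $A^*A$. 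The main obstacle is purely clerical: tracking the six mixed $\theta$-frequency words in the non-commutative expansion of $(e^{i\theta}T+e^{-i\theta}T^*)^3$ and recognizing them as $A + A^*$ with $A = T^2T^* + TT^*T + T^*T^2$.
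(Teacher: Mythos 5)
Your proof of the main inequality is correct and is essentially the paper's argument: both expand $8H_\theta^3$ into $2\,\textit{Re}(e^{3i\theta}T^3)+2\,\textit{Re}\bigl(e^{i\theta}(T^2T^*+T^*T^2+TT^*T)\bigr)$, apply the triangle inequality to $\|H_\theta\|^3=\|H_\theta^3\|$, and take the supremum over $\theta$; the $T^3=0$ case is handled identically. The only divergence is the $T^2=0$ claim, where you take a detour through the classical equality $w(T)=\tfrac{1}{2}\|T\|$ for square-zero operators and the block form on $\ker T\oplus(\ker T)^\perp$ to get $\|TT^*+T^*T\|=\|T\|^2$ (valid, but requires those two external facts), whereas the paper simply observes that $T^2=0$ makes $4H_\theta^2=T^*T+TT^*$ independent of $\theta$, so $w^2(T)=\sup_\theta\|H_\theta^2\|=\tfrac{1}{4}\|TT^*+T^*T\|$ directly.
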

\begin{proof}

We note that $w(T)=\sup_{\theta \in \mathbb{R}}\|H_\theta\|$ where $H_{\theta}=\textit{Re}(e^{i \theta} T)$. Then,
\begin{eqnarray*}
    H_{\theta} & = & \frac{1}{2}(e^{i\theta} T + e^{-i\theta} T^*) \\
   \Rightarrow 4 {H_{\theta}}^2  &= & e^{2i\theta} T^2 + e^{-2i\theta} {T^*}^2 + T^{*}T+TT^{*} \\ 
\Rightarrow 8H^3_{\theta}  &= &\big( e^{2i\theta} T^2 + e^{-2i\theta} {T^*}^2 + T^{*}T+TT^{*}\big)(e^{i\theta} T + e^{-i\theta} T^*)\\
\Rightarrow H^3_{\theta}  &= & \frac{1}{4}\textit{Re}(e^{3i\theta} T^3) +\frac{1}{4}\textit{Re}(e^{i\theta} (T^2T{^*}+{T{^*}}T^2+TT{^*}T)\\
\Rightarrow \|H^3_{\theta}\|  &\leq & \frac{1}{4}\|\textit{Re}(e^{3i\theta} T^3)\| +\frac{1}{4}\|\textit{Re}(e^{i\theta} (T^2T{^*}+{T{^*}}T^2+TT{^*}T)\|.
\end{eqnarray*}
Taking the supremum over $\theta \in \mathbb{R}$ in the above inequality we have the desired inequality.
If $T^2=0$ then $4 {H_{\theta}}^2  =  T^{*}T+TT^{*}$ and so $w(T)=\frac{1}{2}\sqrt{\|TT^*+{T^*}T\|}.$
\smallskip
If $T^3=0$ then $H^3_{\theta} = \frac{1}{4}\textit{Re}(e^{i\theta} (T^2T{^*}+{T{^*}}T^2+TT{^*}T)$ and so $w^3(T)=\frac{1}{4}w(T^2T{^*}+{T{^*}}T^2+TT{^*}T)$.
\end{proof}

\begin{remark}

Abu-Omar and kittaneh \cite{OK}  proved that $w^2(T)\leq \frac{1}{2}w(T^2)+\frac{1}{4}\|TT^*+T^*T\|$. Our inequality obtained  in Theorem \ref{theorem:upperboundA} gives a better bound for the numerical radius for the  matrix $T$ than the bound obtained in \cite{OK}, where
$T=\left(\begin{array}{ccc}
    1&1&2 \\
    0&-1&1\\
		0&0&0
 \end{array}\right).$ In particular,  $w(T)\leq 1.784$ if we follow the inequality obtained in Theorem \ref{theorem:upperboundA}, whereas $w(T)\leq 1.989$  if we follow the bound obtained in \cite{OK}.
\end{remark}
We next prove the following inequality.
 
\begin{theorem}\label{theorem:upperbound1}
Let $T \in B(H)$. Then for each $r\geq 1$, 
\[w^{2r}(T)\leq \frac{1}{2}w^r(T^2)+\frac{1}{4}\big\|(T^{*}T)^r+(TT^{*})^r\big\|.\] 
\end{theorem}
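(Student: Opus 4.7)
The plan is to adapt the framework of Theorem~\ref{theorem:upperbound}, re-using the identity $4H_\theta^2 = 2\,\textit{Re}(e^{2i\theta}T^2) + P$ established there. Rather than squaring this operator identity, I would estimate $\|H_\theta x\|^{2r}$ pointwise on unit vectors. For $\|x\|=1$, a direct computation gives
\[
\|H_\theta x\|^2 \;=\; \langle H_\theta^2 x,x\rangle \;=\; \tfrac{1}{2}\,\textit{Re}\!\left(e^{2i\theta}\langle T^2 x,x\rangle\right) + \tfrac{1}{4}\bigl(\|Tx\|^2+\|T^*x\|^2\bigr),
\]
and bounding the real part by its absolute value yields
\[
\|H_\theta x\|^2 \;\leq\; \tfrac{1}{2}|\langle T^2 x,x\rangle| + \tfrac{1}{4}\|Tx\|^2 + \tfrac{1}{4}\|T^*x\|^2.
\]

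The main step is to raise to the $r$-th power. The coefficients $\tfrac{1}{2},\tfrac{1}{4},\tfrac{1}{4}$ sum to $1$, so I would apply Jensen's inequality to the convex function $t\mapsto t^r$ ($r\ge 1$) to obtain
\[
\|H_\theta x\|^{2r} \;\leq\; \tfrac{1}{2}|\langle T^2 x,x\rangle|^r + \tfrac{1}{4}\|Tx\|^{2r} + \tfrac{1}{4}\|T^*x\|^{2r}.
\]
This three-term convex decomposition is the crux of the argument: a naive bound on $\langle Px,x\rangle^r$ via $(a+b)^r\le 2^{r-1}(a^r+b^r)$ would introduce an unwanted factor of $2^{r-1}$, whereas the present choice preserves the coefficients $\tfrac{1}{2}$ and $\tfrac{1}{4}$ required in the statement.

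To finish, I would apply the McCarthy/Jensen operator inequality $\langle Ax,x\rangle^r\le \langle A^r x,x\rangle$ (valid for positive $A$, unit $x$, and $r\ge 1$) to $A=T^*T$ and $A=TT^*$, giving
\[
\|Tx\|^{2r}+\|T^*x\|^{2r} \;\leq\; \langle\bigl((T^*T)^r+(TT^*)^r\bigr)x,x\rangle \;\leq\; \|(T^*T)^r+(TT^*)^r\|,
\]
and use $|\langle T^2 x,x\rangle|\le w(T^2)$. Taking the supremum over unit vectors yields the desired bound for $\|H_\theta\|^{2r}$, and a final supremum over $\theta\in\mathbb{R}$, together with $w(T)=\sup_\theta\|H_\theta\|$, produces the claimed inequality. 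No serious obstacle arises; the only delicate choice is the three-weight Jensen application described above, and as a sanity check the proof specializes at $r=1$ to the known inequality $w^2(T)\le \tfrac{1}{2}w(T^2)+\tfrac{1}{4}\|P\|$.
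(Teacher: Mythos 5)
Your argument is correct, but it takes a genuinely different technical route from the paper's. The paper stays at the level of operator norms throughout: from the identity $H_\theta^2=\frac{1}{2}\textit{Re}(e^{2i\theta}T^2)+\frac{1}{4}P$ it bounds $\|H_\theta^2\|$ by a two-term average $\frac{1}{2}\|\textit{Re}(e^{2i\theta}T^2)\|+\frac{1}{2}\|\tfrac{P}{2}\|$, applies scalar convexity of $t^r$ to that \emph{two}-weight split, and then invokes the operator concavity of $t^{1/r}$ (the power-mean operator inequality $\tfrac{A+B}{2}\le\bigl(\tfrac{A^r+B^r}{2}\bigr)^{1/r}$ for positive $A,B$) together with $\|C^{1/r}\|^r=\|C\|$ to convert $\|\tfrac{T^*T+TT^*}{2}\|^r$ into $\|\tfrac{(T^*T)^r+(TT^*)^r}{2}\|$. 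You instead evaluate everything on a unit vector, use the three-weight Jensen split $\tfrac12+\tfrac14+\tfrac14$, and replace the operator-concavity step by the scalar McCarthy inequality $\langle Ax,x\rangle^r\le\langle A^rx,x\rangle$ applied separately to $T^*T$ and $TT^*$. Both routes are valid and give the identical constant; yours is somewhat more elementary in that it needs only scalar convexity and the standard McCarthy inequality rather than operator monotonicity/concavity of fractional powers, while the paper's version keeps the intermediate inequalities as statements about operators, which is closer in style to its other theorems. Your $r=1$ sanity check and the final two suprema (over unit $x$, then over $\theta$, both legitimate since $t\mapsto t^{2r}$ is increasing) are all in order.
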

\begin{proof}
We note that $w(T)=\sup_{\theta \in \mathbb{R}}\|H_\theta\|$ where $H_{\theta}=\textit{Re}(e^{i \theta} T)$. Now,
\begin{eqnarray*}
    H_{\theta} & = & \frac{1}{2}(e^{i\theta} T + e^{-i\theta} T^*) \\
   \Rightarrow 4 {H_{\theta}}^2  &= & e^{2i\theta} T^2 + e^{-2i\theta} {T^*}^2 + T^{*}T+TT^{*} \\ 
   \Rightarrow {H_{\theta}}^2 & = & \frac{1}{2}\textit{Re}(e^{2i\theta} T^2) +\frac{1}{4}(T^{*}T+TT^{*})\\
	 \Rightarrow \|{H_{\theta}}^2\| & \leq & \frac{1}{2} \big\|\textit{Re}(e^{2i\theta} T^2)\big\| +\frac{1}{4}\big\|T^{*}T+TT^{*}\big\| 
\end{eqnarray*}
For $r\geq 1$,  $t^r$ and $t^{\frac{1}{r}}$ are convex and  concave operator functions respectively and using that we get,
\begin{eqnarray*}
\|{H_{\theta}}^2\|^r &\leq & \big \{\frac{1}{2} \big\|\textit{Re}(e^{2i\theta} T^2)\big\| +\frac{1}{2}\big\|\frac{T^{*}T+TT^{*}}{2}\big\| \big\}^r\\
&\leq & \frac{1}{2} \big\|\textit{Re}(e^{2i\theta} T^2)\big\|^r +\frac{1}{2}\big\|\frac{T^{*}T+TT^{*}}{2}\big\|^r\\
&\leq & \frac{1}{2} \big\|\textit{Re}(e^{2i\theta} T^2)\big\|^r +\frac{1}{2}\big\|(\frac{(T^{*}T)^r+(TT^{*})^r}{2})^{\frac{1}{r}}\big\|^r\\
&= & \frac{1}{2} \big\|\textit{Re}(e^{2i\theta} T^2)\big\|^r +\frac{1}{2}\big\|\frac{(T^{*}T)^r+(TT^{*})^r}{2}\big\|
\end{eqnarray*}
Now taking the supremum over $\theta \in \mathbb{R}$ in the above inequality we get,
\begin{eqnarray*}
  w^{2r}(T)&\leq&\frac{1}{2}w^r(T^2)+\frac{1}{4}\big\|(T^{*}T)^r+(TT^{*})^r\big\|.
\end{eqnarray*}

\end{proof}

\begin{remark}
For $A, B \in B(H)$, Sattari et. al. \cite{SMY} proved that $w^r(B^*A)\leq \frac{1}{4}\|(AA^{*})^r+(BB^*)^r\|+\frac{1}{2}w^r(AB^*)$. When $A=B^*$ then $w^r(A^2)\leq \frac{1}{4}\|(AA^{*})^r+(A^*A)^r\|+\frac{1}{2}w^r(A^2).$ Thus for the case $A=B^*$ our bound obtained in theorem \ref{theorem:upperbound1} is better than the bound obtained by Sattari et. al. \cite{SMY}.  
\end{remark}

Next we give  another upper bound for the numerical radius $w(T)$ in terms of $\|H_\phi\|.$ 

\begin{theorem}\label{theorem:upperbound2}
Let $T \in B(H)$. Then \[w(T) \leq \inf_{\phi \in \mathbb{R}}\sqrt{{\|H_\phi\|}^2+{\|H_{\phi+\frac{\pi}{2}}\|}^2}\] where $H_{\phi}=\textit{Re}(e^{i \phi} T)$.
\end{theorem}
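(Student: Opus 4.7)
The plan is to exploit the fact that if we decompose the complex number $\langle Tx,x\rangle$ into its real and imaginary parts after rotation by $e^{i\phi}$, the resulting two quadratic forms are exactly $\langle H_\phi x,x\rangle$ and $\langle H_{\phi+\pi/2}x,x\rangle$. Combined with the fact that $\|e^{i\phi}\lambda\| = \|\lambda\|$ for any complex $\lambda$, this will give a pointwise identity that trivializes the inequality.

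First I would fix an arbitrary $\phi \in \mathbb{R}$ and an arbitrary unit vector $x \in H$. From the definition $H_\phi = \textit{Re}(e^{i\phi}T) = \tfrac{1}{2}(e^{i\phi}T + e^{-i\phi}T^*)$, a direct computation shows
\[
\langle H_\phi x, x\rangle = \textit{Re}\bigl(e^{i\phi}\langle Tx,x\rangle\bigr),
\]
and replacing $\phi$ by $\phi+\tfrac{\pi}{2}$ gives
\[
\langle H_{\phi+\pi/2}x,x\rangle = \textit{Re}\bigl(ie^{i\phi}\langle Tx,x\rangle\bigr) = -\textit{Im}\bigl(e^{i\phi}\langle Tx,x\rangle\bigr).
\]

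Next I would square and add these two quantities. Writing $z = e^{i\phi}\langle Tx,x\rangle$, the two values above are exactly $\textit{Re}(z)$ and $-\textit{Im}(z)$, so
\[
\langle H_\phi x,x\rangle^2 + \langle H_{\phi+\pi/2}x,x\rangle^2 = |z|^2 = |\langle Tx,x\rangle|^2.
\]
Bounding each quadratic form by its operator norm yields
\[
|\langle Tx,x\rangle|^2 \leq \|H_\phi\|^2 + \|H_{\phi+\pi/2}\|^2.
\]
Taking the supremum over unit vectors $x$ produces $w(T)^2 \leq \|H_\phi\|^2 + \|H_{\phi+\pi/2}\|^2$, and finally taking the infimum over $\phi \in \mathbb{R}$ yields the desired inequality.

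There is really no significant obstacle here; the only thing to verify carefully is the identity $\langle H_{\phi+\pi/2}x,x\rangle = -\textit{Im}(e^{i\phi}\langle Tx,x\rangle)$, which is just the observation that multiplication by $e^{i\pi/2} = i$ rotates a complex number by a right angle so that its real part becomes the negative of the original imaginary part. Everything else is a one-line application of the definitions and the standard bound $|\langle Sx,x\rangle| \leq \|S\|$ for self-adjoint $S$.
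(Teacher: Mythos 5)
Your proof is correct, but it takes a different route from the paper's. The paper works at the operator level: it establishes the identity $H_{\theta+\phi}=H_\phi\cos\theta+H_{\phi+\frac{\pi}{2}}\sin\theta$, applies the triangle inequality together with the Cauchy--Schwarz bound $\|H_\phi\|\,|\cos\theta|+\|H_{\phi+\frac{\pi}{2}}\|\,|\sin\theta|\leq\sqrt{\|H_\phi\|^2+\|H_{\phi+\frac{\pi}{2}}\|^2}$, and then invokes the characterization $w(T)=\sup_{\theta}\|H_\theta\|$. You instead work at the level of the quadratic form, using the exact scalar identity $\langle H_\phi x,x\rangle^2+\langle H_{\phi+\frac{\pi}{2}}x,x\rangle^2=|\langle Tx,x\rangle|^2$ for each unit vector $x$, followed by the bound $|\langle Sx,x\rangle|\leq\|S\|$ for self-adjoint $S$. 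Your argument is in effect the generalization to arbitrary $\phi$ of the observation the authors make in Remark \ref{remark:remark1} for the special case $\phi=0$ (where the statement reduces to $w(T)\leq\sqrt{\|\textit{Re}(T)\|^2+\|\textit{Im}(T)\|^2}$ via the Cartesian decomposition). Your route is slightly more elementary, needing only the definition of the numerical radius rather than the $\sup_\theta\|H_\theta\|$ characterization, and it isolates an exact pointwise identity before the single lossy step; the paper's operator identity, on the other hand, is of the same algebraic flavor as the $H_\theta$ computations used throughout its Section 2 and so fits more uniformly into the surrounding arguments. Both proofs are complete and yield the same conclusion.
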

\begin{proof}
 We have, $H_{\theta}=\textit{Re}(e^{i \theta} T)=\cos\theta \textit{Re}(T)-\sin\theta \textit{Im}(T)$. Then for  $\phi \in [0,2\pi],$ we get 
\begin{eqnarray*}
 H_{\theta+\phi} &=& \cos(\theta+\phi)\textit{Re}(T)-\sin(\theta+\phi)\textit{Im}(T)\\
   &=& \cos\theta [\cos\phi \textit{Re}(T)-\sin\phi \textit{Im}(T)]-\sin\theta [\sin\phi \textit{Re}(T)+\cos\phi \textit{Im}(T)]\\
	 &=& \cos\theta [\cos\phi \textit{Re}(T)-\sin\phi \textit{Im}(T)]-\sin\theta [-\cos(\phi+\frac{\pi}{2}) \textit{Re}(T) \\
	 & + & \sin(\phi+\frac{\pi}{2}) \textit{Im}(T)]\\
	&=&\cos\theta \textit{Re}(e^{i\phi}T)+\sin\theta \textit{Re}(e^{i(\phi+\frac{\pi}{2})}T)\\
	 &=& H_\phi \cos\theta + H_{\phi+\frac{\pi}{2}} \sin\theta \\
	\Rightarrow \|H_{\theta+\phi}\| &\leq &\|H_\phi\cos\theta \|+\|H_{\phi+\frac{\pi}{2}}\sin\theta \|\\
	 \Rightarrow \|H_{\theta+\phi}\|  &\leq & \sqrt{{\| H_\phi\|}^2+{\| H_{\phi+\frac{\pi}{2}}\|}^2}.
\end{eqnarray*}
Taking supremum over $\theta \in \mathbb{R}$ in the above inequality, we get
\[ w(T) \leq  \sqrt{{\| H_\phi\|}^2+{\| H_{\phi+\frac{\pi}{2}}\|}^2}.\]
This is true for any $\phi \in \mathbb{R}$ and so we get,
\[w(T) \leq \inf_{\phi \in \mathbb{R}}\sqrt{{\|H_\phi\|}^2+{\|H_{\phi+\frac{\pi}{2}}\|}^2}.\]
 \end{proof}

\begin{remark}\label{remark:remark1}
Noting that for $ \phi = 0, \| H_\phi \| = \| Re(T) \| $ and $\| H_{\phi + \pi/2}\| = \| Im(T) \|,$  it follows from Theorem \ref{theorem:upperbound2} that  $w(T) \leq \sqrt{\|\textit{Re}(T)\|^2+\|\textit{Im}(T)\|^2}$. Also, this inequality follows directly from the definition of the numerical radius by considering the Cartesian decomposition of $T$.
\end{remark}

Next we give an upper bound for the numerical radius of $n\times n$ operator matrices which follows from \cite[Theorem $2$]{OK2} and \cite[Remark $1$] {OK2}. 
\begin{theorem}\label{theorem:upperbound3}
Let $H_1, H_2,\ldots,H_n$ be Hilbert spaces and $H=\bigoplus ^n_{i=1}H_i$. If $A=(A_{ij})$ be an $n\times n$ operator matrix acting on $H$ with $A_{ij}\in B(H_j,H_i)$, then
\[w(A)\leq \max_{1\leq i \leq n}\big\{ w(A_{ii})+\frac{1}{2}\sum^n_{j=1,j\neq i}(\|A_{ij}\|+\|A_{ji}\|)\big\}.\]
\end{theorem}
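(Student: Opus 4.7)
The plan is to derive this bound as a direct composition of the two results cited from \cite{OK2}, exactly as the authors indicate. The organizing idea is that an $n\times n$ block operator matrix admits a natural ``scalar surrogate'' whose numerical radius dominates that of the original and is itself easy to bound by a Gershgorin-type argument.

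First, I would introduce the companion scalar matrix $\widehat{A}=(\widehat{a}_{ij})$ of order $n$ given by
$$\widehat{a}_{ii}=w(A_{ii}),\qquad \widehat{a}_{ij}=\|A_{ij}\|\ (i\neq j),$$
all of whose entries are non-negative reals. By \cite[Theorem 2]{OK2}, the numerical radius of the block operator $A$ acting on $H=\bigoplus_{i=1}^n H_i$ is dominated by the numerical radius of this scalar matrix, namely
$$w(A)\le w(\widehat{A}).$$
This step replaces the infinite-dimensional block problem by a finite-dimensional one, and is the point where the internal structure (the numerical radii of the diagonal blocks, rather than only their norms) is actually exploited.

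Second, I would invoke the Gershgorin-type estimate of \cite[Remark 1]{OK2}, which asserts that for any scalar matrix $M=(m_{ij})$ of order $n$,
$$w(M)\le \max_{1\le i\le n}\Big\{\abs{m_{ii}}+\frac{1}{2}\sum_{j\ne i}\bigl(\abs{m_{ij}}+\abs{m_{ji}}\bigr)\Big\}.$$
Specialising to $M=\widehat{A}$ and using non-negativity of its entries, we have $\abs{\widehat{a}_{ii}}=w(A_{ii})$ and $\abs{\widehat{a}_{ij}}=\|A_{ij}\|$, so the right-hand side collapses precisely to the expression appearing in the theorem. Chaining with the bound from Step~1 yields the claim.

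Since the argument is merely the juxtaposition of two already-established results, there is no substantive obstacle. The only subtlety worth flagging is why the diagonal contribution in the final bound is $w(A_{ii})$ rather than $\|A_{ii}\|$: this is built into the definition of $\widehat{A}$ via \cite[Theorem 2]{OK2}, which is the place where block information is used more sharply than a crude Gershgorin bound applied to $(\|A_{ij}\|)$ would give.
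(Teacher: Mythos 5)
Your proposal is correct and is essentially the paper's own route: the authors state that the theorem ``follows from \cite[Theorem 2]{OK2} and \cite[Remark 1]{OK2},'' i.e.\ precisely the domination $w(A)\le w(\widehat{A})$ by the nonnegative scalar companion matrix followed by the Gershgorin-type numerical radius bound that you chain together. (The paper's source also carries an equivalent direct verification by expanding $\langle Ax,x\rangle$ for a unit vector $x=(x_1,\dots,x_n)$ and symmetrizing the off-diagonal terms, but this is the same estimate unpacked.)
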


Using above Theorem  \ref{theorem:upperbound3} we can estimate the spectral radius of sum of product of $n$ pairs of operators as follows.
\begin{theorem}\label{theorem:spectral} 
Let $A_i, B_i \in B(H)$. The spectral radius of $\sum^n_{i=1}A_iB_i$ satisfies the inequality
\[\rho (\sum^n_{i=1}A_iB_i)\leq \max_{1\leq i\leq n}\{ w(B_iA_i)+\frac{1}{2}\sum^n_{j=1,j\neq i}(\|B_iA_j\|+\|B_jA_i\|)\}.\]

\end{theorem}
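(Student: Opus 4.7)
The plan is to realize the sum $\sum_{i=1}^n A_iB_i$ as (the only nonzero block of) a product $TS$ of block matrices, then swap the order and apply Theorem~\ref{theorem:upperbound3} to the full block matrix $ST$.

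Concretely, on $H^{(n)} = \bigoplus_{i=1}^n H$ I would introduce the $n\times n$ block operator matrices
\[
T \;=\; \begin{pmatrix} A_1 & A_2 & \cdots & A_n \\ 0 & 0 & \cdots & 0 \\ \vdots & \vdots & & \vdots \\ 0 & 0 & \cdots & 0 \end{pmatrix},
\qquad
S \;=\; \begin{pmatrix} B_1 & 0 & \cdots & 0 \\ B_2 & 0 & \cdots & 0 \\ \vdots & \vdots & & \vdots \\ B_n & 0 & \cdots & 0 \end{pmatrix}.
\]
A direct computation shows that $TS$ has $\sum_{i=1}^n A_iB_i$ in its $(1,1)$-entry and zeros elsewhere, while $ST$ is the full block matrix $(B_iA_j)_{1\le i,j\le n}$. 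Hence $\rho(TS) = \rho\!\left(\sum_{i=1}^n A_iB_i\right)$.

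Next I would invoke the standard identity $\sigma(TS)\cup\{0\} = \sigma(ST)\cup\{0\}$ for bounded operators on a Hilbert space, which yields $\rho(TS)=\rho(ST)$. Combining this with $\rho(ST)\le w(ST)$ gives
\[
\rho\!\left(\sum_{i=1}^n A_iB_i\right) \;=\; \rho(ST) \;\le\; w(ST).
\]

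Finally, I would apply Theorem~\ref{theorem:upperbound3} to the block matrix $ST = (B_iA_j)$. Its diagonal blocks are $B_iA_i$ and its off-diagonal blocks $(ST)_{ij} = B_iA_j$, $(ST)_{ji} = B_jA_i$, so the theorem immediately yields
\[
w(ST) \;\le\; \max_{1\le i\le n}\Bigl\{ w(B_iA_i) + \tfrac{1}{2}\sum_{\substack{j=1\\ j\ne i}}^{n}\bigl(\|B_iA_j\| + \|B_jA_i\|\bigr)\Bigr\},
\]
which is exactly the claimed bound. The only nontrivial ingredient is the ``commutativity'' of the spectral radius $\rho(TS)=\rho(ST)$; the construction of $T,S$ and the appeal to Theorem~\ref{theorem:upperbound3} are then routine, so I do not anticipate a real obstacle.
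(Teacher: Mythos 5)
Your proposal is correct and follows essentially the same route as the paper: the authors also embed $\sum_{i=1}^n A_iB_i$ as the $(1,1)$-block of the product $TS$ of exactly these two block matrices, swap to $ST=(B_iA_j)$ using $\rho(TS)=\rho(ST)$, bound $\rho(ST)\le w(ST)$, and apply Theorem~\ref{theorem:upperbound3}. No gaps; your explicit mention of $\sigma(TS)\cup\{0\}=\sigma(ST)\cup\{0\}$ just makes precise a step the paper leaves implicit.
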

\begin{proof}
We have 
\begin{eqnarray*}
\rho (\sum^n_{i=1}A_iB_i)&=& \rho \left(\begin{array}{cccccc}
    \sum^n_{i=1}A_iB_i&0&.&.&.&0 \\
    0&0&.&.&.&0 \\
    .& & & & &  \\
    .& & & & & \\
    .& & & & & \\
    0&0&.&.&.&0
    \end{array}\right) \\
		&=&\rho \left(\begin{array}{cccccc}
    A_1&A_2&.&.&.&A_n \\
    0&0&.&.&.&0 \\
    .& & & & &  \\
    .& & & & & \\
    .& & & & & \\
    0&0&.&.&.&0
    \end{array}\right)\left(\begin{array}{cccccc}
    B_1&0&.&.&.&0 \\
    B_2&0&.&.&.&0 \\
    .& & & & &  \\
    .& & & & & \\
    .& & & & & \\
    B_n&0&.&.&.&0
    \end{array}\right)\\
		&=& \rho \left(\begin{array}{cccccc}
    B_1&0&.&.&.&0 \\
    B_2&0&.&.&.&0 \\
    .& & & & &  \\
    .& & & & & \\
    .& & & & & \\
    B_n&0&.&.&.&0
    \end{array}\right)\left(\begin{array}{cccccc}
    A_1&A_2&.&.&.&A_n \\
    0&0&.&.&.&0 \\
    .& & & & &  \\
    .& & & & & \\
    .& & & & & \\
    0&0&.&.&.&0
    \end{array}\right)\\
		&=& \rho \left(\begin{array}{cccccc}
    B_1A_1&B_1A_2&.&.&.&B_1A_n \\
    B_2A_1&B_2A_2&.&.&.&B_2A_n\\
    .& & & & &  \\
    .& & & & & \\
    .& & & & & \\
    B_nA_1&B_nA_2&.&.&.&B_nA_n
    \end{array}\right)
		\end{eqnarray*}
		\begin{eqnarray*}
		&\leq& w\left(\begin{array}{cccccc}
    B_1A_1&B_1A_2&.&.&.&B_1A_n \\
    B_2A_1&B_2A_2&.&.&.&B_2A_n\\
    .& & & & &  \\
    .& & & & & \\
    .& & & & & \\
    B_nA_1&B_nA_2&.&.&.&B_nA_n
    \end{array}\right)\\
		&\leq& \max_{1\leq i\leq n}\big\{ w(B_iA_i)+\frac{1}{2}\sum^n_{j=1,j\neq i}(\|B_iA_j\|+\|B_jA_i\|)\big\}.
\end{eqnarray*}
\end{proof}

\section{On lower bound of numerical radius inequalities}
We begin this section with following inequality on lower bound of numerical radius.
\begin{theorem}\label{theorem:lowerbound1}
Let $T \in B(H)$. Then  
\[w^{4}(T)\geq \frac{1}{4}C^2(T^2)+\frac{1}{8}m(T^2P+PT^2)+\frac{1}{16}\|P\|^2, \]  
\noindent where $ P=T^{*}T+TT^{*}, C(T)=\inf_{x\in H,\|x\|=1}\inf_{\phi \in \mathbb{R}}\|\textit{Re}(e^{i\phi} T)x\|.$
\end{theorem}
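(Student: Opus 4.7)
The plan is to revisit the operator identity
\[ 16 H_\theta^4 \;=\; 4\bigl(\textit{Re}(e^{2i\theta} T^2)\bigr)^2 + 2\,\textit{Re}\bigl(e^{2i\theta}(T^2P + PT^2)\bigr) + P^2 \]
established in the proof of Theorem~\ref{theorem:upperbound}, and to convert it into a lower bound by pairing both sides with a unit vector $x \in H$ instead of taking operator norms. Since $H_\theta$ is self-adjoint, $H_\theta^4$ is positive, so $\|H_\theta\|^4 = \|H_\theta^4\| \geq \langle H_\theta^4 x, x\rangle$ for every unit $x$. Evaluating the identity at $x$ gives
\[ 16\,\langle H_\theta^4 x, x\rangle \;=\; 4\,\|\textit{Re}(e^{2i\theta} T^2)\,x\|^2 + 2\,\textit{Re}\bigl(e^{2i\theta}\langle (T^2P + PT^2)x, x\rangle\bigr) + \|Px\|^2. \]

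The decisive step is to make the phase depend on $x$. For each unit $x$, choose $\theta = \theta(x)$ so that $e^{2i\theta(x)}\langle (T^2P + PT^2)x, x\rangle = |\langle (T^2P + PT^2)x, x\rangle|$; this rotates the middle term to realize its absolute value. Then invoke the two defining infima: $\|\textit{Re}(e^{i\phi} T^2)\,x\|^2 \geq C^2(T^2)$ for every unit $x$ and every $\phi \in \mathbb{R}$, and $|\langle (T^2P + PT^2)x, x\rangle| \geq m(T^2P + PT^2)$ since $\langle (T^2P+PT^2)x,x\rangle \in W(T^2P+PT^2)$. Because $w^4(T) = \sup_\theta \|H_\theta^4\| \geq \|H_{\theta(x)}^4\|$, chaining these inequalities produces, for every unit $x$,
\[ 16\, w^4(T) \;\geq\; 4\,C^2(T^2) + 2\,m(T^2P + PT^2) + \|Px\|^2. \]

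To conclude, take the supremum over unit $x$ on the right using $\sup_{\|x\|=1}\|Px\|^2 = \|P\|^2$, and divide by $16$. The only point that needs care is the interplay of quantifiers: $\theta$ is selected as a function of $x$ and then $x$ varies, which is legitimate precisely because $w^4(T) \geq \|H_{\theta(x)}^4\|$ holds for each fixed $x$ individually. In spirit the argument parallels Theorem~\ref{theorem:upperbound}, with operator norms on the self-adjoint pieces replaced by quadratic-form evaluations at $x$, and with the upper-bound triangle estimates replaced by the lower-bound definitions of $C$ and $m$; I expect no substantive obstacle beyond bookkeeping.
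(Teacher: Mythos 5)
Your proposal is correct and follows essentially the same route as the paper's own proof: the same expansion of $16H_\theta^4$, the same $x$-dependent choice of $\theta$ to rotate the cross term onto its absolute value, the same passage to quadratic forms using self-adjointness, and the same final supremum over unit vectors to recover $\|P\|^2$. No gaps.
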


\begin{proof}
We know that $w(T)=\sup_{\phi \in \mathbb{R}}\|H_\phi\|$ where $H_{\phi}=\textit{Re}(e^{i \phi} T)$. Let $x$ be a unit vector in $H$ and let $\theta$ be a real number such that $e^{2i\theta}\langle(T^2P+PT^2)x,x\rangle=|\langle(T^2P+PT^2)x,x\rangle|.$ Then
\begin{eqnarray*}
    H_{\theta} & = & \frac{1}{2}(e^{i\theta} T + e^{-i\theta} T^*) \\
   \Rightarrow 4 {H_{\theta}}^2  &= & e^{2i\theta} T^2 + e^{-2i\theta} {T^*}^2 + P \\ 
	\Rightarrow 16 {H_{\theta}}^4  &= & \big(e^{2i\theta} T^2 + e^{-2i\theta} {T^*}^2 + P \big) \big(e^{2i\theta} T^2 + e^{-2i\theta} {T^*}^2 + P \big)\\
	       &= & \big(e^{2i\theta} T^2 + e^{-2i\theta} {T^*}^2 \big)^2+\big(e^{2i\theta} T^2 + e^{-2i\theta} {T^*}^2 \big)P \\ &+& P \big(e^{2i\theta} T^2 + e^{-2i\theta} {T^*}^2 \big)+P^2\\
& = & 4\big(\textit{Re}(e^{2i\theta} T^2)\big)^2+ 2 \textit{Re}(e^{2i\theta} (T^2P+PT^2))  +P^2\\
\Rightarrow 16 w^4(T) &\geq&  \|4\big(\textit{Re}(e^{2i\theta} T^2)\big)^2+ 2 \textit{Re}(e^{2i\theta} (T^2P+PT^2))  +P^2\|\\
	&\geq&  | \langle \big(4\big(\textit{Re}(e^{2i\theta} T^2)\big)^2+ 2 \textit{Re}(e^{2i\theta} (T^2P+PT^2))  +P^2\big)x,x \rangle| \\
	&=& \mid 4 \langle\big(\textit{Re}(e^{2i\theta} T^2)\big)^2x,x \rangle + 2 \textit{Re}(e^{2i\theta} \langle (T^2P+PT^2)x,x\rangle)  +\langle P^2x,x \rangle | \\
	&=&  4 \|\big(\textit{Re}(e^{2i\theta} T^2)\big)x \|^2+ 2 |\langle (T^2P+PT^2)x,x\rangle|  +\|Px\|^2 \\
	&\geq&  4 C^2(T^2)+ 2 m(T^2P+PT^2)  +\|Px\|^2 \\
\Rightarrow 16 w^4(T) &\geq& 4 C^2(T^2)+ 2 m(T^2P+PT^2)  +\sup_{\|x\|=1}\|Px\|^2 \\
	&=&  4 C^2(T^2)+ 2 m(T^2P+PT^2)  +\|P\|^2 \\
\Rightarrow w^{4}(T)&\geq& \frac{1}{4}C^2(T^2)+\frac{1}{8}m(T^2P+PT^2)+\frac{1}{16}\|P\|^2.	
\end{eqnarray*}
 This completes the proof. 
\end{proof}
\begin{remark}
Kittaneh\cite{FK} proved that $w^2(T)\geq \frac{1}{4}\|P\|,$ which easily follows from our  Theorem \ref{theorem:lowerbound1}. 
\end{remark}
 We next prove the following inequalities involving $\textit{Re}(T)$ and $\textit{Im}(T)$.
\begin{theorem}\label{theorem:lowerbound4}
Let $T\in B(H)$. Then $w(T)\geq \sqrt{\|\textit{Re}(T)\|^2+ m^2(\textit{Im}(T))}$ and $w(T)\geq \sqrt{\|\textit{Im}(T)\|^2+ m^2(\textit{Re}(T))}$.
\end{theorem}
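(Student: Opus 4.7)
The plan is to use the Cartesian decomposition $T=\textit{Re}(T)+i\,\textit{Im}(T)$ together with the observation that for any unit vector $x$, $\langle Tx,x\rangle$ has real part $\langle \textit{Re}(T)x,x\rangle$ and imaginary part $\langle \textit{Im}(T)x,x\rangle$, so
\[
|\langle Tx,x\rangle|^{2}=\langle \textit{Re}(T)x,x\rangle^{2}+\langle \textit{Im}(T)x,x\rangle^{2}.
\]
This identity, which I would write down first, is the basic object on which the whole argument rests.

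Next I would exploit the fact that $\textit{Re}(T)$ is self-adjoint, so that $\|\textit{Re}(T)\|=\sup_{\|x\|=1}|\langle \textit{Re}(T)x,x\rangle|$. Accordingly, choose a sequence of unit vectors $(x_{n})$ with $|\langle \textit{Re}(T)x_{n},x_{n}\rangle|\to \|\textit{Re}(T)\|$. The definition of $m$ applied to $\textit{Im}(T)$ gives the uniform lower bound $|\langle \textit{Im}(T)x_{n},x_{n}\rangle|\geq m(\textit{Im}(T))$ for every $n$. Substituting both pieces of information into the displayed identity yields
\[
w^{2}(T)\geq |\langle Tx_{n},x_{n}\rangle|^{2}\geq \langle \textit{Re}(T)x_{n},x_{n}\rangle^{2}+m^{2}(\textit{Im}(T)),
\]
and letting $n\to\infty$ gives $w^{2}(T)\geq \|\textit{Re}(T)\|^{2}+m^{2}(\textit{Im}(T))$, which is the first inequality after taking square roots. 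The second inequality is obtained by the same argument with the roles of $\textit{Re}(T)$ and $\textit{Im}(T)$ interchanged (using that $\textit{Im}(T)$ is self-adjoint so that its norm is realised through its numerical range).

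There is no serious obstacle here; the proof is essentially a one-line application of the Cartesian decomposition. The only point requiring a little care is the simultaneous handling of a supremum (for $\|\textit{Re}(T)\|$) and an infimum (for $m(\textit{Im}(T))$), which is why I pass to a maximising sequence for $|\langle \textit{Re}(T)x,x\rangle|$ and use the uniform bound $|\langle \textit{Im}(T)x,x\rangle|\geq m(\textit{Im}(T))$ rather than trying to optimise both at the same vector.
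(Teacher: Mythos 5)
Your proof is correct and follows essentially the same route as the paper's: the Cartesian decomposition identity $|\langle Tx,x\rangle|^{2}=\langle \textit{Re}(T)x,x\rangle^{2}+\langle \textit{Im}(T)x,x\rangle^{2}$, a maximising sequence of unit vectors for $\|\textit{Re}(T)\|$ (using self-adjointness), and the uniform bound $|\langle \textit{Im}(T)x_{n},x_{n}\rangle|\geq m(\textit{Im}(T))$. No issues.
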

\begin{proof}
First we assume $\|\textit{Re}(T)\|=|\lambda|$. Therefore, there exists a sequence $\{x_n\}$ in $H$ with $\|x_n\|=1$ such that $\langle \textit{Re}(T)x_n,x_n\rangle \rightarrow \lambda$. Now 
\begin{eqnarray*}
\langle Tx_n,x_n \rangle &=&\langle (\textit{Re}(T)+i\textit{Im}(T))x_n,x_n \rangle \\
\Rightarrow\langle Tx_n,x_n \rangle&=&\langle \textit{Re}(T)x_n,x_n \rangle +i \langle \textit{Im}(T)x_n,x_n \rangle \\
\Rightarrow |\langle Tx_n,x_n \rangle |^2 &=& (\langle \textit{Re}(T)x_n,x_n \rangle)^2 + (\langle \textit{Im}(T)x_n,x_n \rangle)^2\\
\Rightarrow |\langle Tx_n,x_n \rangle |^2&\geq& (\langle \textit{Re}(T)x_n,x_n \rangle)^2 +m^2(\textit{Im}(T))\\
\Rightarrow {w^2(T)}&\geq& \lambda^2 +m^2(\textit{Im}(T))\\
\Rightarrow {w(T)}&\geq& \sqrt {\|\textit{Re}(T)\|^2 +m^2(\textit{Im}(T))}.
\end{eqnarray*}
The proof of other inequality follows in the same way.
\end{proof}

Note that if $\textit{Re}(T)$  and $\textit{Im}(T)$  are unitarily equivalent to scalar operators then $\|\textit{Re}(T)\|=m(\textit{Re}(T))$ and  $\|\textit{Im}(T)\|=m(\textit{Im}(T))$ respectively. Therefore from Remark \ref{remark:remark1} and Theorem \ref{theorem:lowerbound4}  we get the following equality.
\begin{cor}
Let $T \in B(H)$. If either $\textit{Re}(T)$ or $\textit{Im}(T)$ is unitarily equivalent to a scalar operator then $w(T)=\sqrt{\|\textit{Re}(T)\|^2+ \|\textit{Im}(T) \|^2}$.
\end{cor}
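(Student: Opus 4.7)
The plan is to sandwich $w(T)$ between the upper bound from Remark \ref{remark:remark1} and one of the lower bounds from Theorem \ref{theorem:lowerbound4}, relying on the hypothesis to collapse the Crawford number onto the norm.

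First I would recall that Remark \ref{remark:remark1} yields unconditionally
\[ w(T) \leq \sqrt{\|\textit{Re}(T)\|^{2} + \|\textit{Im}(T)\|^{2}}. \]
So only the matching lower bound needs work. The key observation is that if a self-adjoint operator $S$ is unitarily equivalent to a scalar operator $\alpha I$, then $S = \alpha I$ (after the unitary conjugation, whose spectrum and numerical range are preserved), and in particular $m(S) = |\alpha| = \|S\|$. Thus the hypothesis gives either $m(\textit{Re}(T)) = \|\textit{Re}(T)\|$ or $m(\textit{Im}(T)) = \|\textit{Im}(T)\|$.

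Next I would split into the two cases. If $\textit{Im}(T)$ is unitarily equivalent to a scalar operator, then $m(\textit{Im}(T)) = \|\textit{Im}(T)\|$, and the first inequality of Theorem \ref{theorem:lowerbound4} gives
\[ w(T) \geq \sqrt{\|\textit{Re}(T)\|^{2} + m^{2}(\textit{Im}(T))} = \sqrt{\|\textit{Re}(T)\|^{2} + \|\textit{Im}(T)\|^{2}}. \]
If instead $\textit{Re}(T)$ is unitarily equivalent to a scalar operator, I would use the second inequality of Theorem \ref{theorem:lowerbound4} in the analogous way to reach the same lower bound. Combining the upper and lower bounds produces the desired equality.

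There is essentially no obstacle here beyond justifying the reduction from ``unitarily equivalent to a scalar operator'' to ``equal to a scalar multiple of the identity,'' which is immediate since the scalar operator $\alpha I$ commutes with every unitary. The proof is therefore a one-line invocation of Remark \ref{remark:remark1} and Theorem \ref{theorem:lowerbound4} once this identification is made.
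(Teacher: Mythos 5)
Your proof is correct and follows essentially the same route as the paper: the authors likewise note that unitary equivalence to a scalar operator forces $m(\cdot)=\|\cdot\|$ for the relevant self-adjoint part, and then combine the upper bound of Remark \ref{remark:remark1} with the matching lower bound from Theorem \ref{theorem:lowerbound4}. Your case split correctly pairs each hypothesis with the appropriate inequality of that theorem, so nothing is missing.
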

\begin{remark}
 For $T\in B(H)$, Kittaneh et. al. \cite{KMY}  proved that  $w(T)\geq \|\textit{Re}(T)\|$ and  $w(T)\geq \|\textit{Im}(T)\|$. For any bounded linear operators these bounds are weaker than the bounds obtained in Theorem \ref{theorem:lowerbound4}.
\end{remark}

\section {Estimation of zeros of polynomial}
As an application of the inequalities obtained in the previous section we can estimate zeros of the polynomial. Let $p(z)=z^n+a_{n-1}z^{n-1}+\ldots+a_1z+a_0$ be a monic polynomial of degree $n\geq 2$ with complex coefficients $a_0, a_1, \ldots, a_{n-1}$. Then the Frobenius companion matrix of $p$ is given by  
\begin{eqnarray*}
  C(p)&=&\left(\begin{array}{ccccccc}
    -a_{n-1}&-a_{n-2}&.&.&.&-a_1&-a_0 \\
    1&0&.&.&.&0&0\\
		0&1&.&.&.&0&0\\
    .& & & & & & \\
    .& & & & & &\\
    .& & & & & &\\
    0&0&.&.&.&1&0
    \end{array}\right).
		\end{eqnarray*}
		Then the eigenvalues of $C(p)$ are exactly the zeros of the polynomial $p(z)$. Considering $C(p)$ as a linear operator on $\mathbb{C}^n$, we see that if $z$ is a zero of the polynomial  $p(z)$ then $|z|\leq w(C(p))$ as $\sigma(C(p))\subseteq \overline {W(C(p))}$. Many mathematicians have estimated zeros of the polynomial using this approach, some of them are mentioned below. Let $\mu$ be a zero of the polynomial  $p(z)$.\\
	(1) Carmichael and Mason \cite{CM} proved that 
		\[ |\mu|\leq (1+|a_0|^2+|a_1|^2+\ldots+|a_{n-1}|^2)^{\frac{1}{2}}.\]
	(2) Cauchy \cite{CM}	proved that 
		   \[ |\mu|\leq 1+\max \{ |a_0|, |a_1|, \ldots, |a_{n-1}|\}.\]
	(3) Fujii and Kubo \cite{FK1} proved that 
			\[|\mu|\leq \cos\frac{\pi}{n+1}+\frac{1}{2}\big[\big(\sum_{j=0}^{n-2}|a_j|^2\big)^{\frac{1}{2}}+|a_{n-1}|\big].\]
	(4) Kittaneh \cite{FK2} proved that 
			\[ |\mu| \leq\frac{1}{2}\big[ \|C(p)\|+\| C(p)^2\|^{\frac{1}{2}}\big].\]
	(5) Paul and Bag \cite{PB1} proved that 
		   \[ |\mu| \leq\frac{1}{2}\big[ |a_{n-1}|+\cos\frac{\pi}{n}+\sqrt{ (|a_{n-1}|-\cos\frac{\pi}{n})^2+(1+\sqrt{\sum_{k=2}^n|a_{n-k}|^2})^2}\big].\]
	(6) Paul and Bag \cite{PB2} proved that 
	\[ |\mu| \leq\frac{1}{2}\big[w(A)+\cos\frac{\pi}{n-1}+\sqrt{(w(A)-\cos\frac{\pi}{n-1})^2+(1+\sqrt{\sum_{k=3}^n|a_{n-k}|^2})^2}\big],\]where
$A=\left(\begin{array}{cc}
    -a_{n-1}&-a_{n-2} \\
    1&0
 \end{array}\right).$\\
	(7)   Abu-Omar and Kittaneh \cite{A} proved that 
	     \[|\mu| \leq \sqrt{\frac{1}{4}(|a_{n-1}|^2+\alpha)^2+\alpha+\cos^2\frac{\pi}{n+1}}, \] where $\alpha=\sqrt{\sum_{j=0}^{n-1}|a_j|^2}$.\\
	(8) Alpin et. al. \cite{YML} proved that
	     \[ |\mu| \leq  \max_{1\leq k \leq n}[(1+|a_{n-1}|)(1+|a_{n-2}|)\ldots(1+|a_{n-k}|)]^{\frac{1}{k}}.\]

		Using Theorem \ref{theorem:upperbound1} and observing that spectral radius is always dominated by numerical radius we can easily prove the following theorem.
			\begin{theorem}\label{theorem:zero}
		If $\mu$ is a zero of the polynomial $p(z)$, then 
		\[ |\mu| \leq \big(\frac{1}{2} w^2(C^2)+ \frac{1}{4}\|(C^*C)^2+(CC^*)^2\| \big )^{\frac{1}{4}},\] where $C=C(p)$.
	\end{theorem}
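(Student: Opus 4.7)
The plan is essentially to combine the spectral inclusion principle with Theorem \ref{theorem:upperbound1} applied at the parameter value $r=2$, using the Frobenius companion matrix as the bounded linear operator.

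First I would recall the standard observation explained in the paragraph preceding the theorem: since the zeros of the monic polynomial $p(z)$ are exactly the eigenvalues of the Frobenius companion matrix $C = C(p)$ viewed as an operator on $\mathbb{C}^n$, we have $\sigma(C) \subseteq \overline{W(C)}$, and consequently
\[
|\mu| \leq \rho(C) \leq w(C)
\]
for every zero $\mu$ of $p$. So it suffices to bound $w(C)$ from above by the quantity on the right-hand side of the claimed inequality.

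Next I would invoke Theorem \ref{theorem:upperbound1}, which states that for every $T \in B(H)$ and every $r \geq 1$,
\[
w^{2r}(T) \leq \tfrac{1}{2} w^r(T^2) + \tfrac{1}{4}\bigl\|(T^*T)^r + (TT^*)^r\bigr\|.
\]
Specializing to $T = C$ and $r = 2$ yields
\[
w^4(C) \leq \tfrac{1}{2} w^2(C^2) + \tfrac{1}{4}\bigl\|(C^*C)^2 + (CC^*)^2\bigr\|.
\]
Taking fourth roots and combining with the spectral-radius estimate above gives exactly the desired bound on $|\mu|$.

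There is essentially no obstacle here: the whole argument is a one-line application of Theorem \ref{theorem:upperbound1} with $r=2$ followed by the classical fact $\rho(T) \leq w(T)$. The only thing that requires any care is the remark that the companion matrix acts on the finite-dimensional Hilbert space $\mathbb{C}^n$, so Theorem \ref{theorem:upperbound1} is applicable. All of the technical work has been front-loaded into the numerical radius inequality of Section 2.
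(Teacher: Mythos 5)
Your proposal is correct and is exactly the argument the paper intends: the paper explicitly states that the theorem follows from Theorem \ref{theorem:upperbound1} (here with $r=2$ applied to $C=C(p)$) together with the fact that the spectral radius is dominated by the numerical radius. Nothing further is needed.
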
	
	In similar way, using Theorem \ref{theorem:upperbound} we have the following theorem.
	\begin{theorem}\label{theorem:zero1}
		If $\mu$ is a zero of the polynomial $p(z)$, then 
		\[ |\mu| \leq \big(\frac{1}{4}w^2(C^2)+\frac{1}{8}w(C^2P+PC^2)+\frac{1}{16}\|P\|^2\big)^{\frac{1}{4}},\]  where $ C=C(p), P=C^{*}C+CC^{*}.$
\end{theorem}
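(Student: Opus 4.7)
The proof is essentially a direct application of Theorem \ref{theorem:upperbound} combined with the companion-matrix observation already used in the preceding discussion, so the plan is short. First I would recall that if $\mu$ is a zero of $p(z)$, then $\mu$ is an eigenvalue of the Frobenius companion matrix $C=C(p)$, hence $|\mu|\leq\rho(C)$. Since the spectrum of any bounded operator lies in the closure of its numerical range, $\rho(C)\leq w(C)$, giving the bound $|\mu|\leq w(C)$ that underlies all the earlier estimates cited in the list (1)--(8).

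Next I would invoke Theorem \ref{theorem:upperbound} applied to $T=C$, which yields
\[
w^{4}(C)\leq \frac{1}{4}w^{2}(C^{2})+\frac{1}{8}w(C^{2}P+PC^{2})+\frac{1}{16}\|P\|^{2},
\]
with $P=C^{*}C+CC^{*}$. Combining this with the chain $|\mu|^{4}\leq w^{4}(C)$ and taking fourth roots produces exactly the asserted inequality.

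There is no genuine obstacle here: the work was done in Theorem \ref{theorem:upperbound}, and this statement is just the specialization to the companion matrix. The only small point to be careful with is that $C$ acts on a finite-dimensional space $\mathbb{C}^{n}$, so the numerical range is already closed and the inclusion $\sigma(C)\subseteq W(C)$ holds without a closure, making the step $|\mu|\leq w(C)$ immediate. Analogous remarks would apply if one wished to state an explicit entrywise form of $P$ and $C^{2}P+PC^{2}$ in terms of the coefficients $a_{0},\ldots,a_{n-1}$, but that is not required for the bound as stated.
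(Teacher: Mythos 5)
Your proposal is correct and follows exactly the route the paper intends: the paper simply remarks that Theorem \ref{theorem:zero1} follows from Theorem \ref{theorem:upperbound} applied to $C(p)$ together with the fact that the spectral radius is dominated by the numerical radius. Your write-up supplies the same argument with the details (eigenvalue of the companion matrix, $|\mu|\leq w(C)$, take fourth roots) made explicit.
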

		
We illustrate with an example to show that the above bounds obtained by us is better than the existing bounds. 
	\begin{example}
	Consider the polynomial $p(z)=z^5+z^4-2$. Then the upper bounds of zeros of this polynomial $p(z)$ estimated by different mathematicians are as shown in the following table.
	\end{example}
	\begin{center}
\begin{tabular}{ |c|c| } 
 \hline
  Carmichael and Mason \cite{CM}& 2.449 \\
 \hline
    Cauchy \cite{CM} &  3.000 \\ 
 \hline 
   Fujii and Kubo \cite{FK1} & 2.366  \\
 \hline 
Kittaneh \cite{FK2} & 2.085\\
\hline
	Alpin et. al. \cite{YML} & 2.000 \\
\hline 
 Paul and Bag \cite{PB1} & 2.407 \\
\hline
Paul and Bag \cite{PB2} & 2.477 \\
\hline
Abu-Omar and Kittaneh \cite{A} & 2.367 \\
\hline
 \end{tabular}
\end{center}
But if $\mu$ is a zero of the polynomial  $p(z)=z^5+z^4-2$ then Theorem \ref{theorem:zero} gives $|\mu|\leq 1.692$ and Theorem \ref{theorem:zero1} gives  $|\mu|\leq 1.881$ which are better than all the estimations mentioned above.

\bibliographystyle{amsplain}

\end{document}